\newtheorem{theorem}{Theorem}[section]
\theoremstyle{definition}
\newtheorem{definition}[theorem]{Definition}
\theoremstyle{remark}
\def\th@plain{
	\thm@notefont{}
	\itshape
}
\def\th@definition{
	\thm@notefont{}
	\normalfont
}
\def\th@remark{
	\thm@notefont{}
	\normalfont
}
\title{Dynamics of the Modified Chebyshev's method to multiple roots}
\author{
	{\large Diego Linares}\textsuperscript{1}
	\and
	{\large \textbf{Carlos Cadenas}}\textsuperscript{1,2}
}
\begin{document}
	\maketitle
	
	\begin{center}
		\footnotesize
		\textsuperscript{1}Departamento de Matem\'aticas, Facultad Experimental de Ciencias y Tecnolog\'{\i}a, Universidad de Carabobo, Venezuela\\
		\textsuperscript{2}Centro Multidisciplinario de Visualizaci\'on y C\'omputo Cient\'{\i}fico (CEMVICC), Universidad de Carabobo, Venezuela\\[2ex]
		\textit{Corresponding author: \texttt{ccadenas@uc.edu.ve}}
	\end{center}
	\begin{abstract}
		This study explores the complex dynamics of the rational function associated with the Modified Chebyshev's root-finding method. After introducing the basic preliminaries of discrete dynamical systems, we analyze the dynamical behavior of the method, classifying the stability of its fixed points and critical orbits. These theoretical findings are then illustrated through dynamical planes, which map the basins of attraction and reveal the convergence characteristics and potential chaotic regions of the method.
	\end{abstract}

\keywords{Nonlinear equations, Modified Chebyshev's method, Dynamics, multiple roots}


	\section{Introduction}

	Specialized algorithms are required for solving nonlinear equations with multiple roots. Many of these have been developed based on Schröder's fundamental ideas (see, for example, \cite{SAkram2019, SAkram2021, HArora2022}, \cite{CECadenas2018a, CECadenas2018b}), including the modified versions of the Newton, Halley, and Chebyshev methods.

	A well-established approach to comparing such iterative methods is through the analysis of their complex dynamics. This is typically addressed from two perspectives: the study of basins of attraction for fixed points and the use of conjugation mappings to explore the parameter space. Regarding the former, several studies focusing on multiple roots can be found in the literature \cite{KKneisl2001, MKansal2020, DKumar2020, FChicharro2020, FChicharro2022, DKumar2018}. Similarly, the latter approach has been extensively documented in works such as \cite{ESchroder1870, Cadenas2024, BNeta2022, Torregrosa2022, Vazquez2018, FZafar2018a, FZafar2018, FZafar2018b, FZafar2022,  TSingh2023, FZafar2023}.

	In \cite{CECadenas2019}, a methodology was introduced to study the dynamics of the modified Newton’s method by utilizing a parameter that establishes the relationship between the multiplicities of roots in polynomials with two distinct roots. This framework was recently applied in \cite{CECadenas2023a} to analyze the modified Halley method. In the present paper, we extend this approach to investigate the dynamics of the modified Chebyshev method.

	The remainder of this paper is organized as follows. Section 2 presents the basic preliminaries and the theoretical framework necessary for the study. In Section 3, we analyze the dynamical behavior of the rational function associated with the modified Chebyshev method, the dynamical planes, and the basins of attraction to visualize the stability of the method. Finally, the results are presented and discussed in Section 4.

	\section{Basic preliminaries}

	In this section, we present the fundamental definitions necessary for the subsequent dynamical analysis. While these concepts are standard in the field of complex dynamics and can be found in classic references such as \cite{AFBeardon1991} and \cite{Urbanski2023}, they are included here to ensure the paper remains self-contained.\newpage

	\begin{definition}
		The \emph{Riemann sphere}, also known as the \emph{extended complex plane}, is defined as
		\[
		\widehat{\mathbb{C}} = \mathbb{C} \cup \{\infty\}.
		\]
	\end{definition}

\begin{definition}
	Let \( R : \widehat{\mathbb{C}} \to \widehat{\mathbb{C}} \) be a rational function and \( \alpha \in \widehat{\mathbb{C}} \). The \( k \)-th iterate of \( R \) at \( \alpha \) is defined as
	\[
	R^k(\alpha) = \underbrace{R \circ R \circ \cdots \circ R}_{k \text{ times}}(\alpha).
	\]
\end{definition}

\begin{definition}
	For \( z \in \widehat{\mathbb{C}} \), the \emph{orbit} of \( z \) under the action of \( R \) is the set
	\[
	\operatorname{Orb}(z) = \{z, R(z), R^2(z), \cdots, R^n(z), \cdots\}.
	\]
\end{definition}

\begin{definition}
	A point \( z_0 \in \widehat{\mathbb{C}} \) is called a \emph{fixed point} of \( R \) if \( R(z_0) = z_0 \).
\end{definition}

\begin{definition}
	A point \( z_0 \) is called \emph{periodic} of (prime) period \( m > 1 \) if \( R^m(z_0) = z_0 \) and \( R^k(z_0) \neq z_0 \) for all \( 0 < k < m \).
\end{definition}

\begin{definition}
	A point \( z_0 \) is called \emph{pre-periodic} if it is not periodic, but there exists an integer \( k > 0 \) such that \( R^k(z_0) \) is periodic.
\end{definition}

\begin{definition}
	A point \( z_{\mathrm{cr}} \in \widehat{\mathbb{C}} \) is called a \emph{critical point} of \( R \) if \( R'(z_{\mathrm{cr}}) = 0 \).
\end{definition}

\begin{definition}
Let \( S : \bar{\mathbb{C}} \to \bar{\mathbb{C}} \) be a rational function with \(\deg(S) \geq 2\). A fixed point \( z_0 \) of \( S \) is classified as follows:
\[
\begin{cases}
	|S'(z_{0})| = 0 & \Rightarrow z_{0} \text{ is superattracting}, \\[4pt]
	|S'(z_{0})| < 1 & \Rightarrow z_{0} \text{ is attracting}, \\[4pt]
	|S'(z_{0})| > 1 & \Rightarrow z_{0} \text{ is repelling}, \\[4pt]
	|S'(z_{0})| = 1 & \Rightarrow z_{0} \text{ is neutral}. \\[4pt]
\end{cases}
\]
\end{definition}

\begin{definition}
	A fixed point \( z_0 \) of \( R \) which does not correspond to a root of the function is called a \emph{strange fixed point}.
\end{definition}

\begin{definition}
	Consider a rational function \( R: \hat{\mathbb{C}} \to \hat{\mathbb{C}} \).
	The \emph{Fatou set} \( \mathcal{F}(R) \) is the maximal open subset of the Riemann sphere where the family of iterates \( \{ R^n \}_{n \in \mathbb{N}} \) is equicontinuous.
	The \emph{Julia set} \( \mathcal{J}(R) \) is defined as its complement: \( \mathcal{J}(R) = \hat{\mathbb{C}} \setminus \mathcal{F}(R) \).
\end{definition}

\begin{definition}
	For a fixed point \( \alpha \) of a rational function \( R \), its \emph{basin of attraction} is the set of initial points whose orbits converge to \( \alpha \):
	\[
	\mathcal{A}(\alpha) = \{ z \in \hat{\mathbb{C}} \; : \; \lim_{n \to \infty} R^n(z) = \alpha \}.
	\]
\end{definition}

\begin{definition}\label{def:conjugacy}
	A \emph{Möbius transformation} is a rational function of the form
	\[
	M(z) = \frac{az + b}{cz + d}, \quad \text{with } ad - bc \neq 0.
	\]
	Two rational functions \( R_1 \) and \( R_2 \) are \emph{conjugate} if there exists a Möbius transformation \( M \) satisfying
	\[
	R_2 = M \circ R_1 \circ M^{-1}.
	\]
\end{definition}

\section{Dynamical behavior of the rational function associated with the method in study}

This section is devoted to the study of the dynamical behavior associated with the Modified Chebyshev's method for solving nonlinear equations. The primary objective is to analyze the rational function derived from applying this iterative scheme to a generic polynomial, focusing on its stability and convergence properties through the tools of complex dynamics.

We begin by introducing the iterative expression of the method under study. For a function \( f \), and given an initial estimate \( z_0 \), the Modified Chebyshev's method is defined by the recurrence relation:
\begin{equation}\label{ModifiedChebychev}
	z_{n+1} = z_n - \frac{m \, f(z_n)}{2 f'(z_n)} \left( 3 - m + m \, L_f(z_n) \right), \quad \text{with} \quad L_f(z) = \frac{f(z)f''(z)}{[f'(z)]^2},
\end{equation}
where \( m \) denotes the multiplicity of the root to be approximated and $L_f(z)$ is called the logarithmic convexity function \cite{MAHernandez1992}.

The application of this method to a generic polynomial yields a rational function, \( R(z) \), whose iterative behavior governs the convergence of the algorithm. The study of the dynamics of \( R(z) \) allows us to understand the global behavior of the method, identifying regions of stability associated with the roots of the polynomial, as well as chaotic regions and other unstable behaviors that can hinder convergence.

\subsection{Conjugacy classes}

Here we establish the conjugacy class  and the analytical expressions for the fixed and critical points  of this family in terms of the parameter $K$. In this work, we investigate the dynamics of the rational operator $R_f(z)$ associated with the modified Chebyshev method (\ref{ModifiedChebychev}), defined as:

\begin{equation}\label{familyR}
	R_f(z) = z - \frac{mf(z)}{2f'(z)}\left(3 - m + mL_f(z)\right).
\end{equation}

To conduct this analysis, we apply the operator to the specific family of polynomials $p(z) = (z-a)^m(z-b)^n$, where $a \neq b$ and $m = Kn$ (with $K$ representing the ratio of multiplicities). A fundamental tool for simplifying this study is the concept of analytic conjugacy, which allows us to generalize the behavior of the method across entire classes of functions. In this sense, we recall the definition \ref{def:conjugacy}:

The operator (\ref{familyR}) satisfies the Scaling Theorem \ref{t:scaling}, a vital property that ensures the dynamical study of a particular polynomial can be extended to any other polynomial under an affine transformation:

\begin{theorem}\label{t:scaling} (The Scaling Theorem).
	Let $f(z)$ be an analytic function on the Riemann sphere, and let $T(z)=\alpha z+\beta$ with $\alpha\neq0$ be an affine map. If $g(z)=(f \circ T)(z)$, then $R_g = T^{-1} \circ R_f \circ T$. That is, the rational map $R_f$ is analytically conjugate to $R_g$ through $T$.
\end{theorem}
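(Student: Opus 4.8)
The plan is to establish the conjugacy by direct computation, showing that the rational operator $R_f$ has a functional form that transforms covariantly under affine changes of variable. The key observation is that the operator in (\ref{familyR}) is built entirely from the ratios $f/f'$ and $L_f = ff''/(f')^2$, together with the multiplicity parameter $m$, which is invariant under composition with an affine map. So the essential content of the theorem is that these building blocks behave correctly under the substitution $z \mapsto T(z)$.

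**First I would** record the chain-rule relations for $g = f \circ T$ with $T(z) = \alpha z + \beta$. Since $T'(z) = \alpha$ and $T''(z) = 0$, we get $g'(z) = \alpha\, f'(T(z))$ and $g''(z) = \alpha^2 f''(T(z))$. From these I would compute the two fundamental ratios at the point $z$, obtaining
\begin{equation}\label{eq:ratios}
	\frac{g(z)}{g'(z)} = \frac{f(T(z))}{\alpha\, f'(T(z))} = \frac{1}{\alpha}\cdot\frac{f(T(z))}{f'(T(z))},
\end{equation}
and, crucially,
\begin{equation}\label{eq:Lfinvariant}
	L_g(z) = \frac{g(z)\,g''(z)}{[g'(z)]^2} = \frac{f(T(z))\,\alpha^2 f''(T(z))}{\alpha^2 [f'(T(z))]^2} = L_f(T(z)).
\end{equation}
The factor $\alpha^2$ cancels exactly, so $L_g(z) = L_f(T(z))$ is affine-invariant, which is the heart of the argument.

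**Next I would** substitute these into the definition of $R_g$. Writing out $R_g(z) = z - \frac{m\,g(z)}{2g'(z)}\bigl(3 - m + m\,L_g(z)\bigr)$ and inserting (\ref{eq:ratios}) and (\ref{eq:Lfinvariant}) gives
\begin{equation*}
	R_g(z) = z - \frac{m}{2\alpha}\cdot\frac{f(T(z))}{f'(T(z))}\bigl(3 - m + m\,L_f(T(z))\bigr).
\end{equation*}
The parenthetical term is precisely the correction factor evaluated at $T(z)$, so the expression is $z - \tfrac{1}{\alpha}\bigl(T(z) - R_f(T(z))\bigr)$. Since $T(z) - R_f(T(z)) = \alpha\bigl(z - T^{-1}(R_f(T(z)))\bigr)$, dividing by $\alpha$ yields $R_g(z) = T^{-1}\bigl(R_f(T(z))\bigr) = (T^{-1}\circ R_f \circ T)(z)$, which is exactly the claimed identity $R_g = T^{-1}\circ R_f\circ T$.

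**The main obstacle** is not conceptual but bookkeeping: one must be careful that the multiplicity $m$ genuinely does not change under the affine conjugation, and that the algebraic manipulation converting $z - \tfrac{1}{\alpha}(T(z) - R_f(T(z)))$ into $T^{-1}(R_f(T(z)))$ is handled correctly using $T^{-1}(w) = (w-\beta)/\alpha$. The cleanest way to finish is to verify directly that $T^{-1}(R_f(T(z)))$ expands to the same expression, rather than manipulating the intermediate form; this sidesteps any sign or factor error. A final remark would note that the hypothesis $\alpha \neq 0$ guarantees $T$ is a genuine affine isomorphism, so $T^{-1}$ exists and the conjugacy is well-defined on $\widehat{\mathbb{C}}$.
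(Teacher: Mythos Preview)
Your proposal is correct and follows essentially the same direct-computation approach as the paper: both use the chain rule to track how $f/f'$ and $L_f$ transform under the affine substitution, and then verify the conjugacy identity algebraically. Your version is slightly more explicit in isolating the affine invariance $L_g(z)=L_f(T(z))$ as the key step, whereas the paper proceeds by computing $T\circ R_g\circ T^{-1}$ and showing it equals $R_f$; these are equivalent formulations of the same argument.
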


\begin{proof}
	With the iteration function $R(z)$, we have

	$$R_g(T^{-1}(z)) = T^{-1}(z)-  \frac{mg\left(T^{-1}(z)\right)}{2g'\left(T^{-1}(z)\right)}\left(3-m+m L_{g}\left(T^{-1}(z)\right)\right) \nonumber$$

	Since
	$\alpha  T^{-1}(z)+\beta=z$,  $g\circ T^{-1}(z)=f(z)$ and $\left(g\circ T^{-1}\right)'(z)=\frac{1}{\alpha}g'\left(T^{-1}(z)\right)$, then

	 $$g'\left(T^{-1}(z)\right)=\alpha\left(g\circ T^{-1}\right)'(z)=\alpha f'(z).$$

	This implies

	\begin{eqnarray*}
		T\circ R_g\circ T^{-1}(z) &=& T\left(R_g(T^{-1}(z))\right)=\alpha R_g\left(T^{-1}(z)\right)+\beta \\
		&=&\alpha   T^{-1}(z)-  \frac{m \alpha g\left(T^{-1}(z)\right)}{2g'\left(T^{-1}(z)\right)}\left(3-m+m L_{g}\left(T^{-1}(z)\right)\right) +\beta\\
		&=&z-\frac{mf(z)}{2f'(z)}\left(3-m+mL_f(z)\right))\\
		&=&R_f(z)
	\end{eqnarray*}

\end{proof}

The Scaling Theorem stated above implies that, through an appropriate change of coordinates, the dynamical analysis of the iteration function given in (\ref{ModifiedChebychev}) for general polynomials can be reduced to the study of its dynamics applied to simpler polynomial forms. The following theorem \ref{t:mio} presents a universal Julia set for quadratic polynomials under the method (\ref{ModifiedChebychev}).

\begin{theorem}\label{t:mio} For a rational map  $R_p(z)$ arising from the method (\ref{ModifiedChebychev}) applied to $p(z)=(z-a)^m(z-b)^n, a\neq b$, where $m=Kn$,  $R_p(z)$  is conjugate via the M\"{o}bius transformation given by $M(z)=\frac{z-a}{z-b}$ to

	\begin{equation}\label{S}
		S(z)=\frac{z^3[2z+K(K+3)]}{(K-1)(K-2)z^3+6K(1-K)z^2+2K^2(3-K)z+2K^3}
	\end{equation}

\end{theorem}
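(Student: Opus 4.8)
The plan is to apply the operator $R_p$ directly to the polynomial $p(z)=(z-a)^m(z-b)^n$ with $m=Kn$, and then conjugate by the M\"{o}bius map $M(z)=\frac{z-a}{z-b}$, showing that the result simplifies to the explicit rational function $S(z)$ in \eqref{S}. Concretely, I would compute $S = M \circ R_p \circ M^{-1}$, where $M^{-1}(w)=\frac{bw-a}{w-1}$. The key preliminary computation is to obtain closed forms for the ingredients of $R_p$: since $p(z)=(z-a)^m(z-b)^n$, the logarithmic derivative gives $\frac{f'(z)}{f(z)}=\frac{m}{z-a}+\frac{n}{z-b}$, from which $\frac{f(z)}{f'(z)}=\frac{(z-a)(z-b)}{m(z-b)+n(z-a)}$. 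Differentiating once more yields $f''(z)$ and hence the logarithmic convexity function $L_f(z)=\frac{f(z)f''(z)}{[f'(z)]^2}$ as an explicit rational function of $z$, $a$, $b$, $m$, $n$.

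First I would substitute these expressions into \eqref{familyR} to obtain $R_p(z)$ as a single rational function in $z$ (with parameters $a,b,m,n$). Next I would perform the conjugation: the Scaling Theorem (Theorem \ref{t:scaling}) lets me reduce the affine dependence, so without loss of generality I may first normalize the roots, e.g. to $a=0$ and $b=1$ (or $a=0$, $b=\infty$), which dramatically shortens the algebra; the M\"{o}bius map then sends the two roots $a,b$ to $0$ and $\infty$, the two natural superattracting fixed points of the scheme. The central step is to substitute $z=M^{-1}(w)$ into $R_p$ and then apply $M$ to the output, collecting everything over a common denominator. Throughout, I would use the substitution $m=Kn$ and cancel the common factor of $n$ (and powers of $(z-a)$, $(z-b)$ that appear), so that the final expression depends only on the single parameter $K$, as claimed.

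The main obstacle is the sheer weight of the symbolic algebra: $L_f(z)$ contributes cubic-degree numerator and denominator terms, and after multiplying through by $\frac{mf}{2f'}$ and conjugating, one must verify that an enormous rational expression collapses exactly to the compact cubic-over-cubic form \eqref{S}. The cancellation that reduces the dependence from $(m,n,a,b)$ down to $K$ alone is the delicate part, and it is easy to mismanage a factor of $2$ or a sign. I would organize the computation by factoring out $(z-a)$ and $(z-b)$ as early as possible and by tracking the numerator and denominator of the conjugated map separately, so that the homogeneity in $n$ is transparent. A practical safeguard is to verify the final $S(z)$ at a few special values: the roots must map to the fixed points $0$ and $\infty$ (confirming $S(0)=0$ and the behavior at infinity), and one can check the degree of $S$ and its fixed-point structure against what the operator predicts. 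Since the statement is an identity between rational functions, once both sides are reduced to lowest terms the verification is in principle mechanical, and the payoff is that all subsequent fixed-point and critical-point analysis can be carried out on the one-parameter family $S(z)$.
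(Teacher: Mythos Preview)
Your proposal is correct and follows essentially the same approach as the paper: compute $M\circ R_p\circ M^{-1}$ directly, using the explicit formulas for $f/f'$ and $L_f$ that come from $p(z)=(z-a)^m(z-b)^n$, and then cancel the common $n$-factors after substituting $m=Kn$. The paper's own proof is in fact even terser than yours---it simply writes down $M^{-1}$ and asserts the outcome of the composition---so your outline, including the optional affine normalization via Theorem~\ref{t:scaling} and the sanity checks at $z=0,\infty$, is a more detailed version of exactly the same computation.
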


\begin{proof}
	Let $p(z)=(z-a)^m(z-b)^n, a\neq b$  with  $m=Kn$ and let $M$ be the M\"{o}bius transformation  given by $M(z)=\frac{z-a}{z-b}$ with its inverse  given by $M^{-1}(u)=\frac{bu-a}{u-1}$, which may be considered as a map from $\mathbb{C}\cup\{\infty\}$. We then have

	\begin{eqnarray*}
		M\circ R_p\circ M^{-1}(u)&=&M\circ R_p\left(\frac{bu-a}{u-1}\right)\nonumber\\
		&=&\frac{u^3[2u+K(K+3)]}{(K-1)(K-2)u^3+6K(1-K)u^2+2K^2(3-K)u+2K^3}
	\end{eqnarray*}
\end{proof}

The parameters \( a \) and \( b \) are absent from the conjugate map (\ref{S}), as they cancel out through the Möbius transformation, consistent with the Scaling Theorem verified by this method.

The dynamical analysis is then performed by examining specific values of the parameter \( K \) and studying the strange fixed points that depend on this parameter. The basins of attraction corresponding to these fixed points are discussed, which were produced by modifying the code provided in \cite{CECadenas2017d}.

\subsection{Study of the fixed points}
When analyzing the dynamics of a rational map on the Riemann sphere, a fixed point is defined as a point that remains invariant under iteration, satisfying the condition $S(z) = z$. In this work, we determine the fixed points of the conjugacy map (\ref{S}) by solving this equation.

Fixed points are classified into two categories: \emph{regular fixed points}, which correspond to the roots of the original polynomial and typically serve as attractors, and \emph{strange fixed points}, which arise from the rational form of the iterative scheme and are not associated with any roots of the polynomial. For numerical stability, it is desirable for these strange fixed points to be repelling rather than attracting.

Solving the equation $S(z) = z$ yields the following fixed points:

$
\begin{aligned}
	z_{0} &= 0, \\[4pt]
	z_{1} &= 1, \\[4pt]
	z_{2, 3} &= \frac{(3 \pm \sqrt{2K + 3})K}{K - 3}, \\[4pt]
	z_{4} &= \infty.
\end{aligned}
$

\subsubsection{Analysis of the case $K=-2$}
In this case, the conjugacy map is given by:
\begin{equation}\label{S-2}
	S(z)=\frac{z^3}{2(3z^2-6z+4)}
\end{equation}
The derivative of (\ref{S-2}) is:
\begin{equation}\label{S'-2}
	S'(z)=\frac{3z^2(z-2)^2}{2\left(3z^2-6z+4\right)^2}
\end{equation}
From (\ref{S'-2}), the strange fixed points are:
$z=\frac{2(3\pm i)}{5}$, where $\left|S'\left(\frac{2(3\pm i)}{5}\right)\right|=3 > 1$. Thus, both are repelling fixed points.

On the other hand, the fixed points $z=0$ and $z=\infty$ are superattracting. The point $z=2$ is a critical point of multiplicity two; if we choose $z_0=2$ as an initial guess, the iteration $z_{n+1}=S(z_n)$ using $S$ in (\ref{S-2}) converges to zero.

\subsubsection{Analysis of the case $K=1$}
In this case, the conjugacy map is given by:
\begin{equation}\label{S1}
	S(z) = \frac{z^3(z+2)}{2z+1}
\end{equation}
The derivative is calculated as:
\begin{equation}\label{Sprime1}
	S'(z) = \frac{6z^2(z+1)^2}{(2z+1)^2}
\end{equation}
From (\ref{Sprime1}), the strange fixed points are determined to be:
$z=1$ with $|S'(1)| = 1.5 > 1$, which is a repelling fixed point (repulsor). The points $z=\frac{-3-\sqrt{5}}{2}$ with $|S'(\frac{-3-\sqrt{5}}{2})| \approx 0.1352 < 1$ and $z=\frac{-3+\sqrt{5}}{2}$ with $|S'(\frac{-3+\sqrt{5}}{2})| \approx 0.0274 < 1$ are attracting fixed points (attractors).

On the other hand, the fixed points $z=0$ and $z=\infty$ are superattracting. In this scenario, $z=-2$ is a critical point of multiplicity two. If the initial value $z_0=-2$ is used, the iteration $z_{n+1}=S(z_n)$ defined in (\ref{S1}) converges to infinity.

\subsubsection{Analysis of the case $K=2$}
In this case, the conjugacy map is:
\begin{equation}\label{S2}
	S(z)=-\frac{z^3(z+5)}{2(3z^2-2z-4)}
\end{equation}
The derivative is given by:
\begin{equation}\label{Sprime2}
	S'(z)=-\frac{3z^2(2z-5)(z+2)^2}{2\left(3z^2-2z-4\right)^2}
\end{equation}
From (\ref{Sprime2}), the strange fixed points are $z=1$ with $|S'(1)|=4.5 > 1$, and $z=-2(3\pm \sqrt{7})$ with $|S'(-6-2\sqrt{7})| \approx 2.8311 > 1$ and $|S'(-6+2\sqrt{7})| \approx 6.9467 > 1$. Consequently, all of these are repelling fixed points.

On the other hand, the fixed points $z=0$ and $z=\infty$ are superattracting. The critical points are $z=5/2$ and $z=-2$ (the latter with multiplicity two). If we use $z_0=5/2$ as the initial value, the iteration $z_{n+1}=S(z_n)$ using $S$ from (\ref{S2}) converges to infinity. When $z_0=-2$ is used, the iteration yields the fixed point $z=1$ in a single step.

\subsubsection{Analysis of the case $K=3$}
In this case, the conjugacy map is:
\begin{equation}\label{S3}
	S(z)=\frac{z^3(z+9)}{z^3-18z^2+27}
\end{equation}
and its derivative is:
\begin{equation}\label{Sprime3}
	S'(z)=\frac{z^2(z+3)^2(z^2-42z+81)}{\left(z^3-18z^2+27\right)^2}
\end{equation}
The strange fixed points are $z=\pm 1$, where $|S'(1)| = 6.4 > 1$ and $|S'(-1)| = 7.75 > 1$. Thus, both are repelling fixed points.

On the other hand, the fixed points $z=0$ and $z=\infty$ are superattracting. The critical points from (\ref{Sprime3}) are $z=3(7\pm 2\sqrt{10})$ and $z=-3$, the latter having multiplicity two. If $z_0=3(7+ 2\sqrt{10})$ is taken as the initial value, the sequence generated by the iteration $z_{n+1}=S(z_n)$ in (\ref{S3}) converges slowly to infinity. Conversely, by using $z_0=3(7- 2\sqrt{10})$, the iteration converges to zero with third-order convergence. Finally, if $z_0=-3$ is chosen, the fixed point $z=1$ is obtained in a single step according to the mapping defined in (\ref{S3}).

\subsection{Stability of the Fixed Points}

To determine the stability of the fixed points of the operator \( S(z) \), we evaluate the modulus of its derivative at each fixed point. The first derivative of \( S(z) \) is given by:

\begin{equation}\label{S'General}
	S'(z)=\frac{2z^2(z+K)^2\left[(K-1)(K-2)z^2-2K(K-1)(K+4)z+3K^2(K+3)\right]}{\left[(K-1)(K-2)z^3+6K(1-K)z^2+2K^2(3-K)z+2K^3\right]^2}
\end{equation}

It is evident from \eqref{S'General} that the fixed points \( z = 0 \) and \( z = \infty \) are always superattracting.
The stability of the remaining fixed points depends on the value of the parameter \( K \). In order to classify them, we evaluate the derivative at each strange fixed point resulting in the following two theorems \ref{thm:z1} and \ref{thm:z23}.

\begin{theorem}\label{thm:z1}
	Let \( z_{1} = 1 \) be a fixed point of the rational operator \( S(z) \).
	Then its stability is governed by the parameter \( K \in \mathbb{C} \) according to
	\[
	2 \left| \frac{(K+1)^2}{K+2} \right|.
	\]
	In particular, $z_1$ is superattracting for $K=-1$, and
	\[
	\begin{cases}
		\left| \dfrac{(K+1)^2}{K+2} \right|  <  \frac{1}{2}  & \Rightarrow z_{1} \text{ is attracting}, \\[10pt]
		\left| \dfrac{(K+1)^2}{K+2} \right|  >  \frac{1}{2}  & \Rightarrow z_{1} \text{ is repelling}, \\[10pt]
		\left| \dfrac{(K+1)^2}{K+2} \right|  =  \frac{1}{2}  & \Rightarrow z_{1} \text{ is neutral}.
	\end{cases}
	\]
\end{theorem}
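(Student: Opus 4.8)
The plan is to obtain the stated stability expression by directly evaluating the general derivative formula \eqref{S'General} at the fixed point $z_1 = 1$ and then simplifying. Since the classification of $z_1$ follows immediately from the modulus of $S'(1)$ via the standard stability dichotomy (superattracting, attracting, repelling, neutral), the entire theorem reduces to establishing the clean closed form
\[
S'(1) = \frac{2(K+1)^2}{K+2}.
\]

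First I would substitute $z = 1$ into \eqref{S'General}. The prefactor $2z^2(z+K)^2$ in the numerator becomes $2(K+1)^2$, so the remaining work is to simplify the quadratic bracket in the numerator and the cubic base of the denominator, each evaluated at $z = 1$. Collecting powers of $K$, the numerator bracket $(K-1)(K-2) - 2K(K-1)(K+4) + 3K^2(K+3)$ simplifies to the cubic $K^3 + 4K^2 + 5K + 2$, while the denominator base $(K-1)(K-2) + 6K(1-K) + 2K^2(3-K) + 2K^3$ collapses to the quadratic $K^2 + 3K + 2$.

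The key step --- and the main obstacle --- is recognizing that both of these polynomials factor cleanly. Noting that $K = -1$ is a root of each (indeed a double root of the cubic), one finds $K^3 + 4K^2 + 5K + 2 = (K+1)^2(K+2)$ and $K^2 + 3K + 2 = (K+1)(K+2)$. Substituting these factorizations yields
\[
S'(1) = \frac{2(K+1)^2 \cdot (K+1)^2(K+2)}{\left[(K+1)(K+2)\right]^2} = \frac{2(K+1)^2}{K+2},
\]
after cancelling the common factor $(K+1)^2(K+2)$. Taking the modulus gives $|S'(1)| = 2\left|\frac{(K+1)^2}{K+2}\right|$, as claimed.

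Finally, I would conclude the classification. Setting $K = -1$ gives $S'(1) = 0$, so $z_1$ is superattracting at that parameter value. For $K \neq -1$, comparing $|S'(1)|$ with $1$ --- equivalently comparing $\left|\frac{(K+1)^2}{K+2}\right|$ with $\frac{1}{2}$ --- distributes $z_1$ into the attracting, repelling, or neutral cases exactly as stated.
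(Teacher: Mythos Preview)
Your proof is correct and follows the same approach as the paper: substitute $z=1$ into the general derivative formula \eqref{S'General}, simplify to obtain $|S'(1)| = 2\left|\frac{(K+1)^2}{K+2}\right|$, and then read off the classification from the modulus criterion. In fact you supply considerably more detail than the paper, which simply asserts the result of the substitution without exhibiting the intermediate factorizations $K^3+4K^2+5K+2=(K+1)^2(K+2)$ and $K^2+3K+2=(K+1)(K+2)$ that you work out explicitly.
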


\begin{proof} Stability of the fixed point \(z_1 = 1\) is determined by the modulus criterion: \( |S'(z_1)| \). Substituting \(z_1 = 1\) into \(S'(z)\) gives \[ |S'(1)| = 2 \left| \frac{(K+1)^2}{K+2} \right|. \] For neutral or repelling behavior, we require \( |S'(1)| \ge 1 \) \(\Rightarrow 2 \left| \frac{(K+1)^2}{K+2} \right| \ge 1 \) \(\Rightarrow \left| \frac{(K+1)^2}{K+2} \right| \ge \frac{1}{2}. \) The strict inequality corresponds to repelling, while equality corresponds to neutral. The attracting case is completely analogous: \( |S'(1)| < 1 \) \(\Rightarrow \left| \frac{(K+1)^2}{K+2} \right| < \frac{1}{2}. \) Hence the modulus criterion fully characterizes the stability of \(z_1\), and \(z_1\) is superattracting for \(K=-1\). \end{proof}

\begin{theorem}\label{thm:z23}
	Let
	\[
	z_{2,3} = \frac{(3 \pm \sqrt{2K+3})K}{K-3}
	\]
	be the other fixed points of the rational operator \(S(z)\).
	Then their stability is governed by the parameter \(K \in \mathbb{C}\) according to
	\begin{equation}\label{eq:modulus-z23}
		\left| S'(z_{2,3}) \right| = 2\left| \frac{7K + 11 \mp (2K + 4)\sqrt{3+2K}}{\left(\sqrt{3+2K}\mp1\right)^2}\right| .
	\end{equation}
	In particular, their stability satisfies
	\[
	\begin{cases}
		\left| S'(z_{2,3}) \right| < 1 & \Rightarrow z_{2,3} \text{ is attracting}, \\[2mm]
		\left| S'(z_{2,3}) \right| > 1 & \Rightarrow z_{2,3} \text{ is repelling}, \\[1mm]
		\left| S'(z_{2,3}) \right| = 1 & \Rightarrow z_{2,3} \text{ is neutral}.
	\end{cases}
	\]
\end{theorem}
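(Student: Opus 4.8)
The plan is to reduce the unwieldy quotient in \eqref{S'General} by exploiting the fact that $z_{2,3}$ is a \emph{fixed} point \emph{before} substituting any radical. Writing $S(z)=N(z)/D(z)$ with $N(z)=z^{3}\bigl(2z+K(K+3)\bigr)$, the fixed-point identity $S(z_{2,3})=z_{2,3}$ forces $D(z_{2,3})=N(z_{2,3})/z_{2,3}=z_{2,3}^{2}\bigl(2z_{2,3}+K(K+3)\bigr)$, which is legitimate since $z_{2,3}\neq 0$ for the admissible range of $K$ (vanishing would require $K=3$, already excluded by the denominator $K-3$). Substituting this into the squared denominator of \eqref{S'General} and cancelling the common factor $z_{2,3}^{2}$ collapses the expression to
\[
S'(z_{2,3})=\frac{2\,(z_{2,3}+K)^{2}\,P(z_{2,3})}{z_{2,3}^{2}\,\bigl(2z_{2,3}+K(K+3)\bigr)^{2}},
\]
where $P(z)=(K-1)(K-2)z^{2}-2K(K-1)(K+4)z+3K^{2}(K+3)$ is the quadratic factor in the numerator of \eqref{S'General}.

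Next I would insert the explicit root. Setting $s=\sqrt{2K+3}$ and $z_{2,3}=\dfrac{K(3\pm s)}{K-3}$, the two elementary building blocks factor as $z_{2,3}+K=\dfrac{K(K\pm s)}{K-3}$ and $2z_{2,3}+K(K+3)=\dfrac{K(K^{2}\pm 2s-3)}{K-3}$, so every occurrence of $K$ and of $K-3$ cancels in the ratio. The heart of the argument is then the pair of algebraic identities obtained from $s^{2}=2K+3$ (equivalently $K=(s^{2}-3)/2$):
\[
K\pm s=\frac{(s\pm 3)(s\mp 1)}{2},
\qquad
K^{2}\pm 2s-3=\frac{(s\mp 1)^{3}(s\pm 3)}{4}.
\]
The second of these is what makes the whole computation work: it turns the apparently degree-six denominator factor $\bigl(2z_{2,3}+K(K+3)\bigr)^{2}$ into a high power of $(s\mp 1)(s\pm 3)$, matching factors that also appear in $(z_{2,3}+K)^{2}$. (Alternatively one may avoid the radical entirely and reduce $P(z_{2,3})$ using the quadratic $(K-3)z^{2}-6Kz-2K^{2}=0$ that $z_{2,3}$ satisfies; I find the explicit substitution cleaner.)

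With these substitutions the factors $K$, $K-3$, the full $(s\pm 3)$ power and most powers of $(s\mp 1)$ cancel, leaving $S'(z_{2,3})=\dfrac{W(s)}{(s\mp 1)^{4}(s\pm 3)^{2}}$ for an explicit polynomial $W$ coming from $P(z_{2,3})$ after reduction by $s^{2}=2K+3$. The last step is the polynomial identity $W(s)=\bigl(-2s^{3}+7s^{2}-2s+1\bigr)(s\mp 1)^{2}(s\pm 3)^{2}$, which removes the remaining $(s\mp 1)^{2}(s\pm 3)^{2}$ and gives $S'(z_{2,3})=\dfrac{-2s^{3}+7s^{2}-2s+1}{(s\mp 1)^{2}}$ for the top-sign point $z_{2}$ (the image under $s\mapsto -s$ handling $z_{3}$). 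Since $-2s^{3}+7s^{2}-2s+1=2\bigl(7K+11\mp(2K+4)s\bigr)$ under $s^{2}=2K+3$, this is exactly $\dfrac{2\bigl(7K+11\mp(2K+4)s\bigr)}{(s\mp 1)^{2}}$; taking moduli yields \eqref{eq:modulus-z23}, and the attracting/repelling/neutral trichotomy is then immediate from the stability classification.

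The main obstacle, and the only genuinely nontrivial ingredient, is spotting the factorization $K^{2}\pm 2s-3=\frac{(s\mp1)^{3}(s\pm3)}{4}$ and confirming the degree-seven identity for $W$: both are routine to check once guessed, but they are precisely what produces the dramatic collapse of the denominator from a degree-six expression down to $(s\mp1)^{2}$. I would organize the work so that the two signs are treated simultaneously through the symmetry $s\mapsto -s$, which interchanges $z_{2}$ and $z_{3}$, so that a single reduction delivers both formulas at once.
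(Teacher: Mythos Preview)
Your proposal is correct and is, at its core, the same strategy the paper uses: substitute $z_{2,3}$ into \eqref{S'General} and simplify to obtain \eqref{eq:modulus-z23}. The paper's own proof is a single sentence with no intermediate steps, so your roadmap---collapsing the denominator via the fixed-point identity $D(z_{2,3})=z_{2,3}^{2}\bigl(2z_{2,3}+K(K+3)\bigr)$, the substitution $s=\sqrt{2K+3}$, and the factorizations $K\pm s=\tfrac{1}{2}(s\pm3)(s\mp1)$ and $K^{2}\pm 2s-3=\tfrac{1}{4}(s\mp1)^{3}(s\pm3)$---supplies considerably more detail than the paper itself does.
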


\begin{proof}
By substituting \( z_{2,3} \) in \(\left| S'(z) \right| \), we obtain (\ref{eq:modulus-z23}), applying the stability conditions gives the desired result.
\end{proof}

The stability analysis of strange fixed points is conducted using auxiliary functions that evaluate whether the magnitude of the derivative meets the attraction criterion. More precisely, we establish the following definitions:
\begin{equation}\label{eq:stability-fn1}
	\begin{aligned}
		S_{1}(K) &= \min \{ |S'(1)|, 1 \}, \\
		S_{2,3}(K) &= \min \{ |S'(z_{2,3})|, 1 \}.
	\end{aligned}
\end{equation}

Consequently, parameter values \( K \) yielding \( S_{j}(K) = 1 \) mark the stability thresholds, whereas those with \( S_{j}(K) < 1 \) identify regions where the fixed point \( z_{j} \) exhibits attracting behavior.

\noindent
\begin{figure}[H]
	\centering
	\begin{minipage}{0.32\textwidth}
		\centering
		\includegraphics[width=\linewidth]{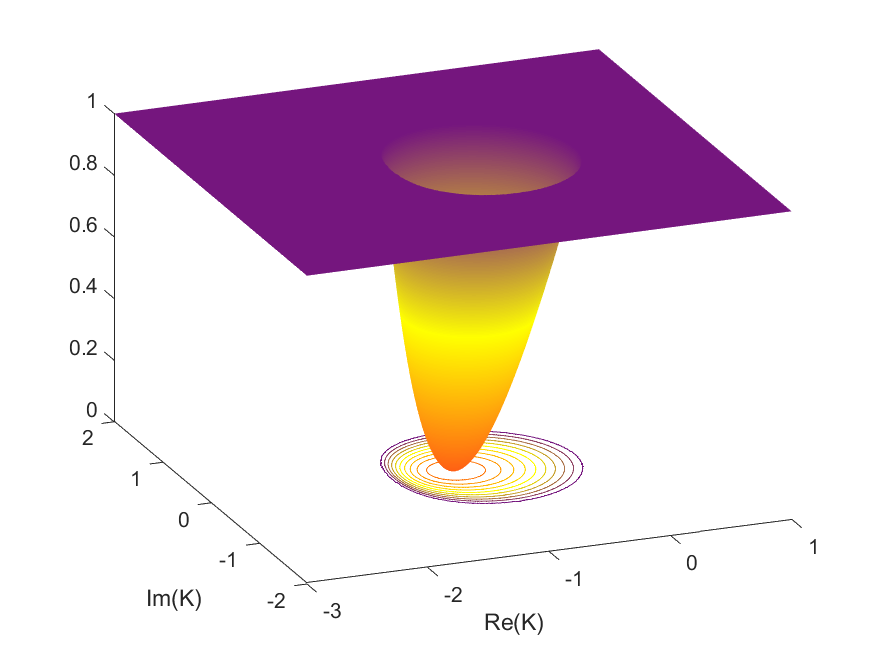}
		\label{fig:image1}
	\end{minipage}
	\hfill
	\begin{minipage}{0.32\textwidth}
		\centering
		\includegraphics[width=\linewidth]{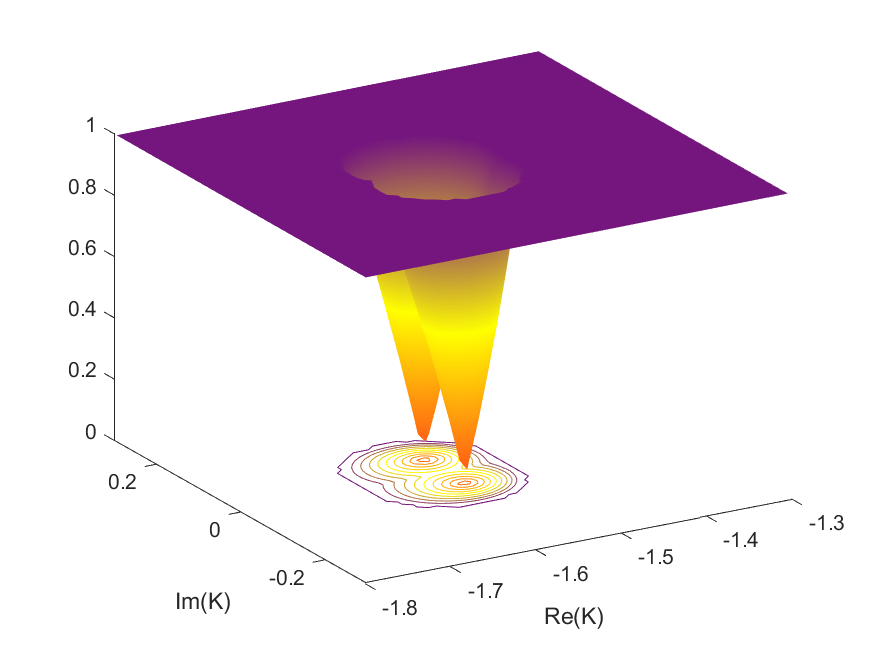}
		\label{fig:image2}
	\end{minipage}
	\hfill
	\begin{minipage}{0.32\textwidth}
		\centering
		\includegraphics[width=\linewidth]{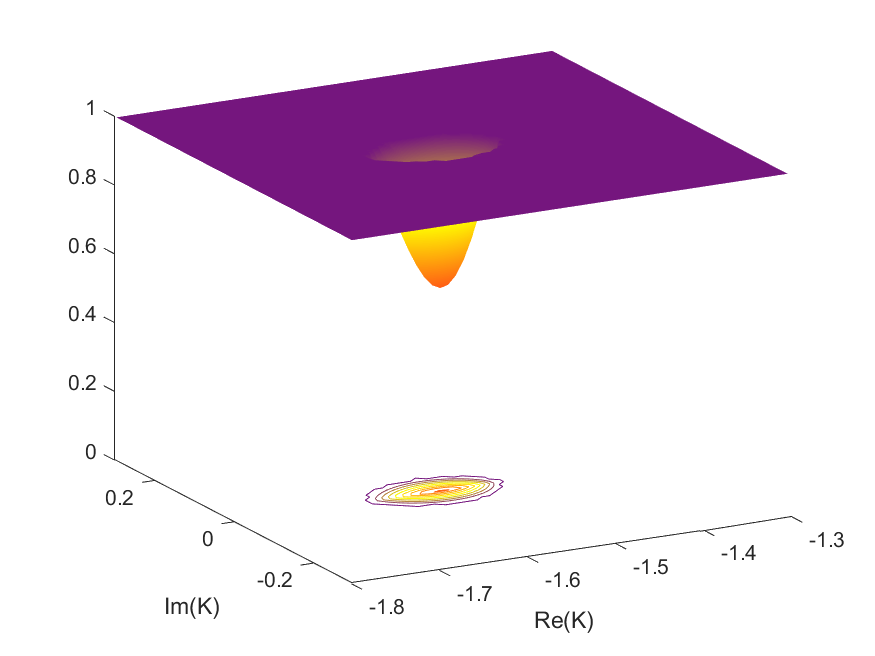}
		\label{fig:image3}
	\end{minipage}
	\caption{Stability functions: left panel corresponds to $z_1$, middle panel to $z_{2}$, and right panel to $z_{3}$}
	\label{fig:figure_1}
\end{figure}

Figure~\ref{fig:figure_1} displays the plots of the stability functions defined in Eq.~\eqref{eq:stability-fn1}, illustrating the boundaries between attraction and repulsion zones within the parameter domain.

\subsection{Study of the Parameter Space}
The parameter space is constructed by iterating a chosen critical point under the action of \(S(z)\) for a wide range of complex values of the parameter \(K\). Each point in the Parameter space corresponds to a specific value of \(K\), and therefore represents a distinct member of the family. In this study, the Parameter space was computed using a grid of \(2000 \times 2000\) points. For each point, up to 50 iterations were performed, with convergence assessed using a tolerance of \(10^{-2}\). This computational approach allows us to visualize how the dynamics of the critical points depend on the parameter values, and to identify regions of stability and divergence.

The critical points of the map \(S(z)\) are obtained by solving \(S'(z) = 0\). Besides the superattracting fixed points at \(z = 0\) and \(z = \infty\), there exist others critical points that govern the dynamics of the family. Specifically, the critical points are given by:

\begin{eqnarray*}\label{Critical_Points}
	C_1 &=& -K \quad \text{(with multiplicity two),} \\
	C_{2,3} &=& \frac{\left((K-1)(K+4) \pm (K+1)\sqrt{(K-1)(K+2)}\right) K}{(K-1)(K-2)}.
\end{eqnarray*}

The resulting dynamics are visualized in Figure \ref{fig:Figure_2} using a color scheme:

\begin{itemize}
	\item Red indicates convergence of the critical point to \(z=0\).
	\item Green indicates convergence of the critical point to \(z=\infty\).
	\item Black represents points for which the iteration does not converge.
\end{itemize}

\begin{figure}[H]
	\centering
	\includegraphics[width=0.32\textwidth]{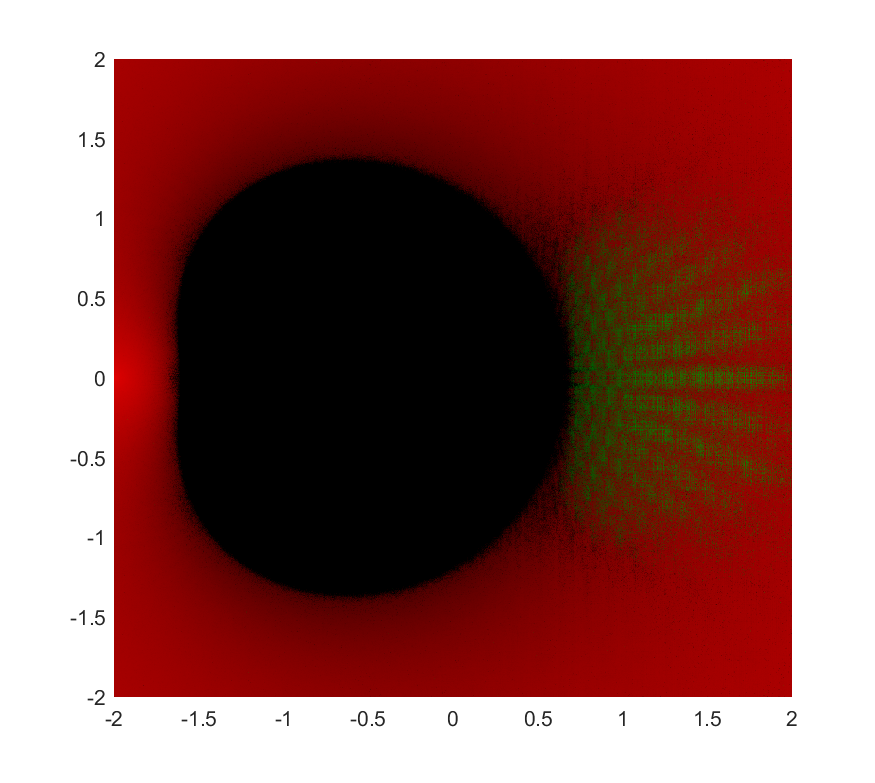}
	\hfill
	\includegraphics[width=0.325\textwidth]{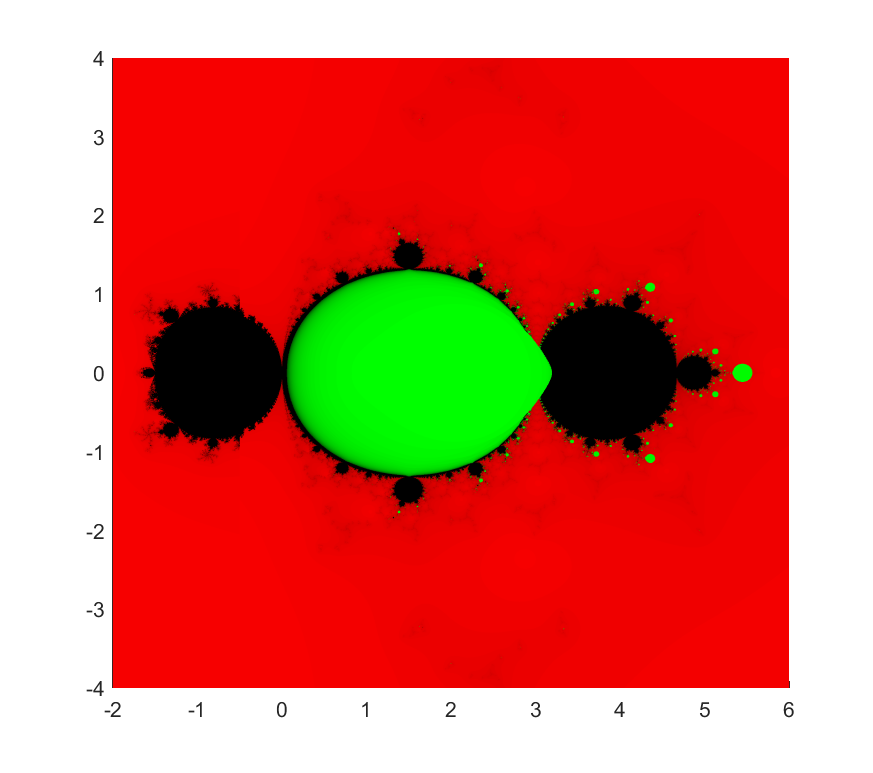}
	\hfill
	\includegraphics[width=0.32\textwidth]{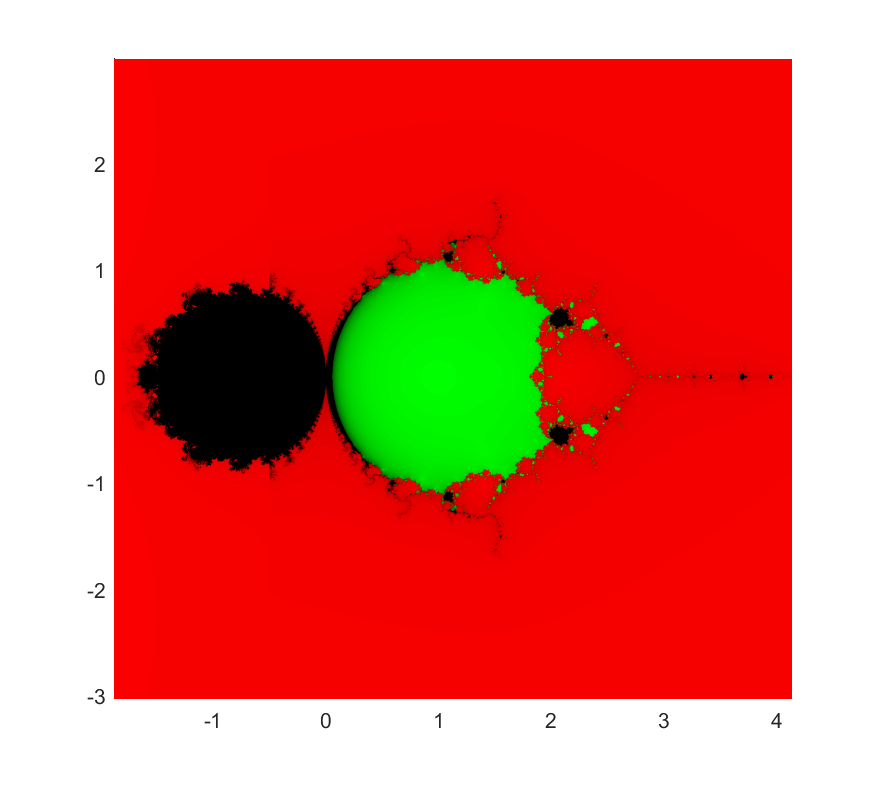}

	\caption{Parameter space associated with the critical points $C_1$, $C_2$, and $C_3$ of $S(z)$.}
	\label{fig:Figure_2}
\end{figure}

\subsection{Dynamical Planes}

This section presents the dynamical planes associated with different values of the Parameter. These plots represent the sets of initial points in the complex plane that, when subjected to an iterative method, converge to one of the roots of the function under study.

A \emph{stable case} refers to parameter values for which there are no strange fixed points, but only zones of convergence to \( 0 \) or \( \infty \), corresponding to the roots \( a \) and \( b \) of the nonlinear equation.
Figure~\ref{fig:Figure_3} illustrates the dynamical planes for some real values of \( K \),  while Figure \ref{fig:Figure_7} depict the dynamical planes for complex values of \( K \).

Each region of the plane is color-coded to indicate its dynamics, where red represents the basin of attraction for the root at $0$, green corresponds to the root at $\infty$, black denotes divergence, and shaded regions represent zones of fastest convergence to the roots. The figures were generated using a $2000 \times 2000$ point grid with $50$ iterations and a convergence tolerance of $10^{-20}$.  These images are composed exclusively of red (convergence to the root $0$) and green (convergence to the root $\infty$) regions, with no black areas present.

\vspace{-1cm}

\begin{figure}[H]
	\centering
	\begin{minipage}{\linewidth}
		\centering
		\includegraphics[scale=0.8]{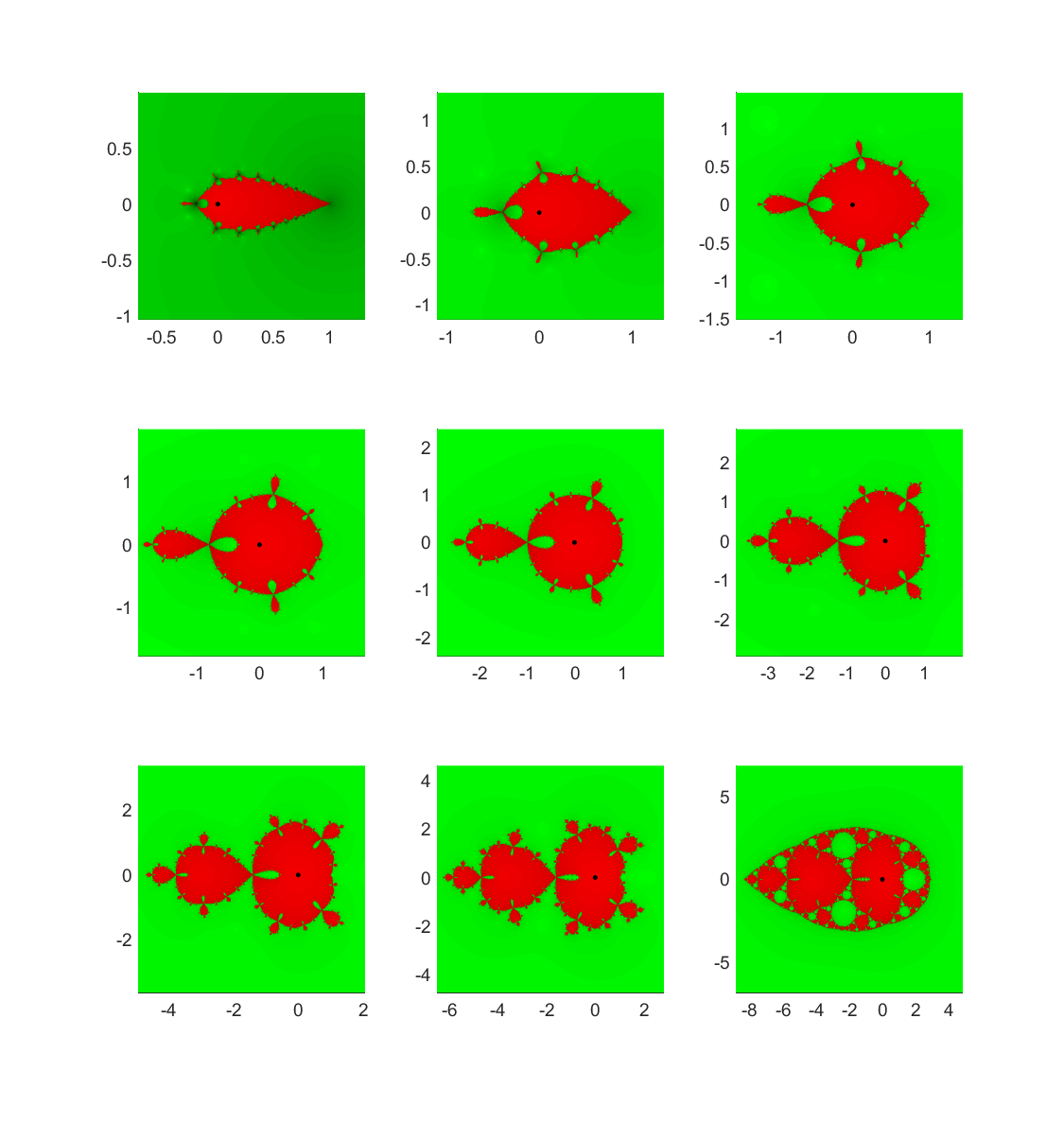}
		\vspace{-40pt}
		\caption{Dynamical planes. $K=0.2:0.2:1.8$}
		\label{fig:Figure_3}
	\end{minipage}
\end{figure}

 This complete absence of divergence confirms that the method is exceptionally stable for these real parameters, successfully converging to one of the two roots for every initial point in the complex domain considered.

\begin{figure}[H]
	\centering
	\begin{minipage}{\textwidth}
		\centering
		\includegraphics[scale=1]{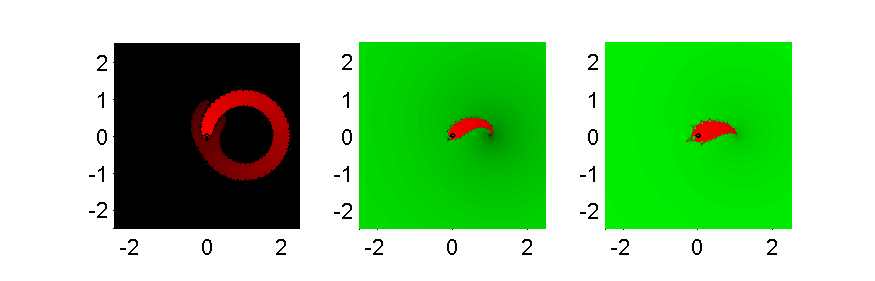}
		\caption{Dynamical planes. $K=0.1I:0.1:0.2+0.1I$}
		\label{fig:Figure_7}
	\end{minipage}
\end{figure}

In Figure \ref{fig:Figure_7}, except for $K=0.1I$, these planes visually validate the theoretical stability of the method for these parameter ranges, showing a healthy, well-partitioned phase space where convergence is guaranteed and relatively fast (as indicated by the shaded intensity regions), making the method highly reliable for root-finding in this domain.

To illustrate unstable cases (Figure~\ref{fig:Figure_8}), some parameter values have been chosen for which the basins of attraction display the presence of strange fixed points.
These parameters lie inside the black colored regions in the parameter space (Figure~\ref{fig:Figure_2}).

\begin{figure}[H]
	\centering
	\begin{minipage}{\textwidth}
		\centering
		\includegraphics[scale=0.65]{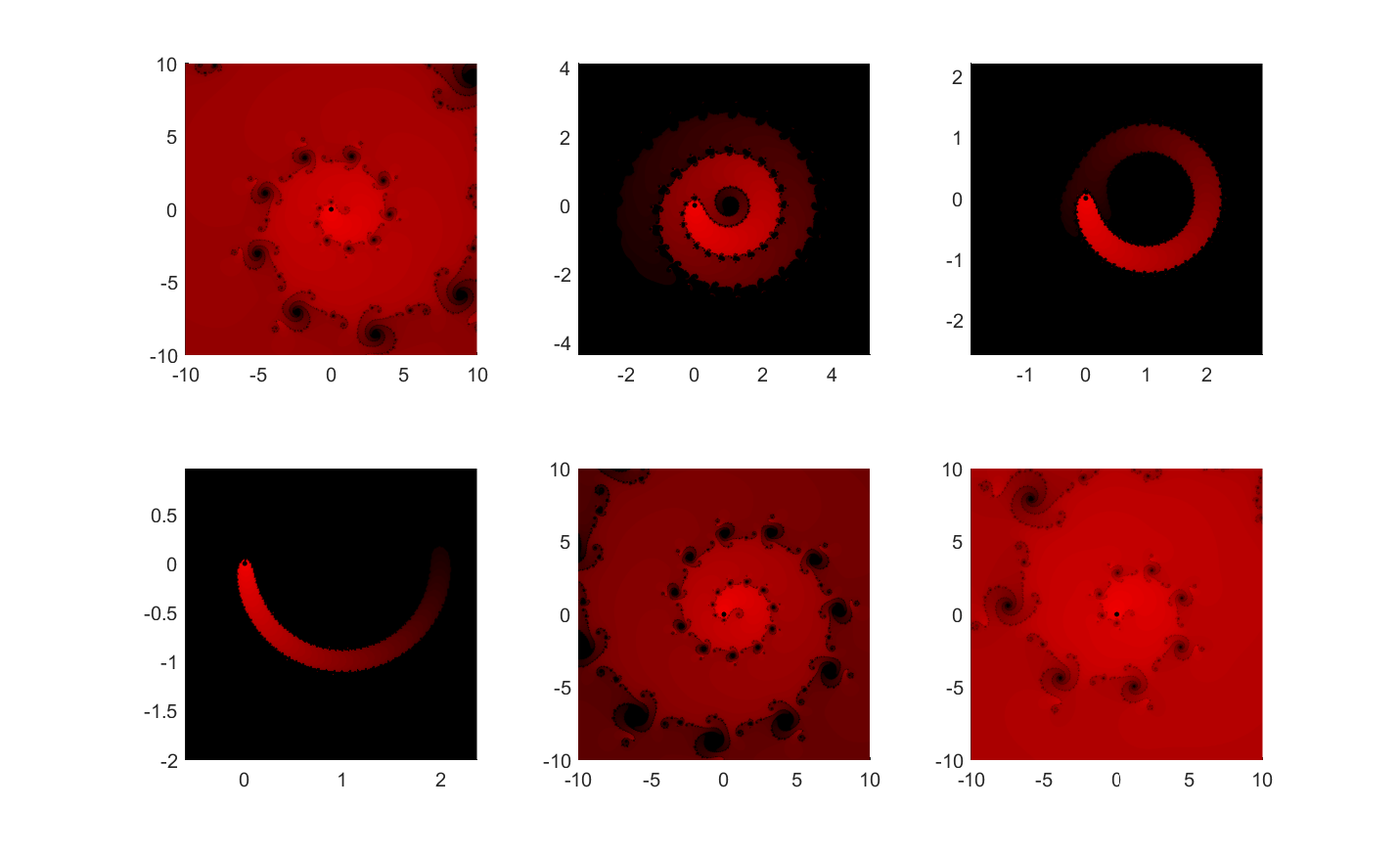}
		\caption{Dynamical planes.  $K= \left\lbrace  -\frac{1}{2}i, -\frac{1}{5}i, -\frac{1}{10}i, -\frac{1}{20}i, \frac{2}{5}i, \frac{3}{5}i \right\rbrace $}
		\label{fig:Figure_8}
	\end{minipage}
\end{figure}

The dynamical planes in Figure~\ref{fig:Figure_8} present a markedly different and unstable behavior. Here, the regions are composed solely of black (divergence) and red (convergence to \( z_0 = 0 \)) areas, with a complete absence of green regions corresponding to convergence to the other root. For the purely imaginary values \( K = -\frac{1}{5}i, -\frac{1}{10}i, -\frac{1}{20}i \), the plots are dominated by extensive black areas, indicating widespread divergence of the iterative method. Within these large divergence zones, only small, isolated red structures appear. The presence of these large black regions demonstrates a significant loss of stability and reliability in the numerical scheme when \( K \) takes these particular imaginary values.

\subsection{Basins of attraction}
To complete this work, the dynamical analysis of the function
\[
g(x) = (x-1)^m (x+1)^n
\]
is carried out by computing its fixed and critical points. Subsequently, the basins of attraction of the roots \( z = 1 \) and \( z = -1 \) are graphically represented.

Applying the modified Chebyshev method to \( g(x) \) yields the following iteration map.

Let us define the polynomials \( u_i(K) \) corresponding to the coefficients of the numerator in the expression involving \( x \):

\[
\begin{aligned}
	u_1(K) &= K^2 + 3K + 2, \\
	u_2(K) &= 2K^3 + 4K^2 - 6, \\
	u_3(K) &= 6K^3 + 6K^2 - 6K + 6, \\
	u_4(K) &= 6K^3 - 4K^2 - 2, \\
	u_5(K) &= 2K^3 - 7K^2 + 3K .
\end{aligned}
\]

Then the iteration function can be written compactly as:

\[
x_{n+1}=\frac{u_1(K) \, x_{n}^4 + u_2(K) \, x_{n}^3 + u_3(K) \, x_{n}^2 + u_4(K) \, x_{n} + u_5(K)}
{2\big( (K+1)x_{n} + (K-1) \big)^3 } .
\]

So, the iteration function in the complex plane is given by

\[
G(z)=\frac{u_1(K) \, z^4 + u_2(K) \, z^3 + u_3(K) \, z^2 + u_4(K) \, z + u_5(K)}
{2\big( (K+1)z + (K-1) \big)^3 } .
\]

Its fixed points are:

\[
\begin{aligned}
	z_{0} &= 1, \\[4pt]
	z_{1} &= -1, \\[-4pt]
	z_{2,3} &= -\frac{2K^{2} + K\bigl(1 \pm 2\sqrt{2K+3}\bigr) - 3}
	{(2K+3)(K+1)}
\end{aligned}
\]

where \( z_0 = 1 \) corresponds to one of the roots of the polynomial, while \( z_1, z_2, \) and \( z_3 \) are strange fixed points.

\subsubsection{Stability of the fixed points}

It is clear that $z_0 = 1$ is a superatractor fixed point, since it is the root of $g$ with multiplicity $m$. It is necessary to study the stability of the fixed points $z_1 = -1$ and $z_{2,3}$. To do this, we differentiate $G$:

\begin{equation}
	G'(z) = \frac{(z -1)^{2} [(z^{2}+6z+5) K^{3}+4 (z+2)^{2} K^{2}+(5z^{2}+6z-11) K +2 (z-1)^{2}]}{2 [(z+1) K +z-1]^{4}}
\end{equation}

and evaluate these values. The operator $G'(z)$ in $z = -1$ gives:

\begin{equation}
	|G'(-1)| = \left| \frac{1}{2} K^{2}-\frac{3}{2} K +1 \right|
\end{equation}

\vspace{1em}
In the following result we present the stability of the fixed point $z = -1$ in theorem \ref{thm:especial1}, and for $z_{2,3}$ in theorem \ref{thm:especial2}.

\begin{theorem}\label{thm:especial1}
	Let \( z_{1} = -1 \) be a fixed point of the rational operator \( G(z) \).
	Then its stability is governed by the parameter \( K \in \mathbb{C} \) according to
	\[
	|G'(-1)| = \left| \frac{1}{2} K^{2} - \frac{3}{2} K + 1 \right|.
	\]
	In particular, $z_1$ is superattracting when $G'(-1) = 0$, and
	\[
	\begin{cases}
		\left| \left(K -1\right) \left(K -2\right) \right|  < 2  & \Rightarrow z_{1} \text{ is attracting}, \\[6pt]
		\left| \left(K -1\right) \left(K -2\right) \right|  > 2  & \Rightarrow z_{1} \text{ is repelling}, \\[6pt]
		\left| \left(K -1\right) \left(K -2\right) \right|  = 2  & \Rightarrow z_{1} \text{ is neutral}.
	\end{cases}
	\]
\end{theorem}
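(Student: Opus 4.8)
The plan is to reduce the claim to a direct evaluation of the derivative $G'(z)$, already displayed in the excerpt, at the fixed point $z = -1$, followed by a routine application of the modulus criterion for fixed points stated in Section~2. First I would substitute $z = -1$ into the bracketed cubic (in $K$) appearing in the numerator of $G'(z)$ and notice that its leading coefficient $z^2 + 6z + 5 = (z+1)(z+5)$ vanishes at $z = -1$. This cancellation of the $K^3$ term is the only step that is not pure bookkeeping, and it is what reduces the expression to a quadratic in $K$.

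Next I would evaluate the surviving pieces at $z = -1$: the coefficient of $K^2$ is $4(z+2)^2 = 4$, the coefficient of $K$ is $5z^2 + 6z - 11 = -12$, the constant term is $2(z-1)^2 = 8$, the outer factor $(z-1)^2$ equals $4$, and the denominator $2[(z+1)K + z - 1]^4$ collapses to $2(-2)^4 = 32$ since the $(z+1)K$ contribution disappears. Assembling these yields
\[
G'(-1) = \frac{4\,(4K^2 - 12K + 8)}{32} = \frac{1}{2}K^2 - \frac{3}{2}K + 1,
\]
which matches the closed form in the theorem. The decisive observation for the classification is the factorization $\frac{1}{2}K^2 - \frac{3}{2}K + 1 = \frac{1}{2}(K-1)(K-2)$, giving $|G'(-1)| = \tfrac{1}{2}\,|(K-1)(K-2)|$.

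Finally I would apply the fixed-point stability definition directly: $z_1 = -1$ is attracting, neutral, or repelling according to whether $|G'(-1)|$ is below, equal to, or above $1$, and superattracting precisely when $G'(-1) = 0$. Multiplying each threshold through by $2$ turns these into the stated conditions $|(K-1)(K-2)| < 2$, $|(K-1)(K-2)| = 2$, and $|(K-1)(K-2)| > 2$, while the superattracting case corresponds to the two roots $K = 1$ and $K = 2$ of the quadratic. I do not anticipate any real obstacle: the whole argument is one substitution plus the recognition that the $K^3$ term cancels, so the only thing demanding care is the arithmetic of evaluating each coefficient at $z = -1$.
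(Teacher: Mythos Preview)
Your proposal is correct and follows essentially the same approach as the paper: direct evaluation of $G'(-1)$ from the displayed formula for $G'(z)$, factorization as $\tfrac{1}{2}(K-1)(K-2)$, and application of the modulus criterion. You are simply more explicit about the substitution arithmetic (in particular, the vanishing of the $K^3$ coefficient via $z^2+6z+5=(z+1)(z+5)$), whereas the paper summarizes this step as ``Direct computation gives\ldots''.
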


\begin{proof}
	The stability of the fixed point \( z_1 = -1 \) is determined by the value of \( |G'(-1)| \).
	Direct computation gives
	\[
	G'(-1) = \frac{1}{2}K^2 - \frac{3}{2}K + 1,
	\]
	so that
	\[
	|G'(-1)| = \left| \frac{1}{2}K^2 - \frac{3}{2}K + 1 \right|.
	\]

	Factorizing the quadratic,
	\[
	\frac{1}{2}K^2 - \frac{3}{2}K + 1 = \frac{1}{2}\big( K^2 - 3K + 2 \big) = \frac{1}{2}(K-1)(K-2).
	\]
	Thus,
	\[
	|G'(-1)| = \frac{1}{2}\left| (K-1)(K-2) \right|.
	\]

	For attracting behavior we require \( |G'(-1)| < 1 \), which is equivalent to
	\[
	\frac{1}{2}\left| (K-1)(K-2) \right| < 1 \quad \Longrightarrow \quad \left| (K-1)(K-2) \right| < 2.
	\]
	For repelling behavior, \( |G'(-1)| > 1 \) yields
	\[
	\left| (K-1)(K-2) \right| > 2.
	\]
	The neutral case corresponds to equality:
	\[
	\left| (K-1)(K-2) \right| = 2.
	\]

	Finally, \( z_1 = -1 \) is superattracting when \( G'(-1) = 0 \), which occurs for \( K = 1 \) or \( K = 2 \).
\end{proof}

\begin{theorem}\label{thm:especial2}
	Let
	\[
	z_{2,3} = -\frac{2K^{2} + K\bigl(1 \pm 2\sqrt{2K+3}\bigr) - 3}
	{(2K+3)(K+1)}
	\]
	be the strange fixed points of the rational operator \(G(z)\).
	Then their stability is governed by the parameter \(K \in \mathbb{C}\) according to

	\begin{equation}\label{eq:modulus-z24}
		\left| G'(z_{2,3}) \right| = \left| \frac{3 K^{2}+11 K \pm \left(K^{2}+K -3\right) \sqrt{2 K +3}+10}{\left(K +1\right)^{2}} \right| .
	\end{equation}

	In particular,
	\[
	\begin{cases}
		\left| G'(z_{2,3}) \right| < 1 & \Rightarrow z_{2,3} \text{ are attracting}, \\[2mm]
		\left| G'(z_{2,3}) \right| > 1 & \Rightarrow z_{2,3} \text{ are repelling}, \\[1mm]
		\left| G'(z_{2,3}) \right| = 1 & \Rightarrow z_{2,3} \text{ are neutral}.\\[1mm]
		\left| G'(z_{2,3}) \right| =0 & \Rightarrow z_{2,3} \text{ are superattracting}.
	\end{cases}
	\]
\end{theorem}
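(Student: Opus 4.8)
The plan is to prove the theorem by direct substitution of the fixed points $z_{2,3}$ into the expression for $G'(z)$ displayed just above the statement, and then to simplify the result until it matches \eqref{eq:modulus-z24}; the stability trichotomy (together with the superattracting case) is then immediate upon comparing the modulus with $1$ and with $0$. To keep the radical under control I would first set $s=\sqrt{2K+3}$, so that $s^2=2K+3$, and write $z_{2,3}=-\dfrac{2K^2+K(1\pm 2s)-3}{s^2(K+1)}$. This reduces every subsequent manipulation to polynomial arithmetic in $K$ and $s$ subject to the single relation $s^2=2K+3$, which is exactly what makes the radical tractable.

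Rather than expanding $G'$ wholesale, I would exploit its factored form and compute the small building blocks first. Concretely, I would evaluate $z+1$, $z-1$, and the denominator factor $(z+1)K+(z-1)$ at $z=z_{2,3}$. I expect each of these to collapse dramatically: the numerators should factor through $s$, leaving $z\pm 1$ as simple ratios and, crucially, $(z+1)K+(z-1)$ reducing to a single monomial in $K$ and $s$ — which trivializes the fourth power in the denominator of $G'$. For the cubic-in-$K$ bracket in the numerator I would substitute $p=z+1$ (using $z-1=p-2$), regroup the bracket by powers of $p$, and factor each coefficient; I anticipate clean factorizations such as $(K+1)^2(K+2)$, $(K+1)(K-1)(K+2)$, and $(K-1)(K-2)$. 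Only then would I insert the value of $p$ and reduce via $s^2=2K+3$.

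The main obstacle is purely the bookkeeping of the radical. After the substitutions one must separate each quantity into a ``rational'' part (free of $s$) and an $s$-part, track the correlated $\pm$ signs of the two fixed points consistently, and check that the many cross terms collapse to the compact numerator in \eqref{eq:modulus-z24}. Because such computations are error-prone, I would validate the final coefficient of $\sqrt{2K+3}$ against a convenient special value before concluding: for instance at $K=1$ one has $z_{2,3}=\mp 1/\sqrt{5}$, and a short direct computation of $G'$ at these points gives $\left|G'(z_{2,3})\right|=6$, which simultaneously pins down the rational part and the coefficient of the radical. Once \eqref{eq:modulus-z24} is confirmed, the classification of $z_{2,3}$ as attracting, neutral, repelling, or superattracting follows at once by comparing $\left|G'(z_{2,3})\right|$ with $1$ and with $0$, completing the proof.
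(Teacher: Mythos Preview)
Your plan is precisely the paper's approach: the paper's entire proof reads, in essence, ``Direct substitution of \(z_{2,3}\) into the derivative \(G'(z)\) yields the expression in \eqref{eq:modulus-z24}; applying the standard stability criteria for fixed points of rational maps gives the stated conditions.'' Your outline merely supplies the algebraic scaffolding (the auxiliary variable \(s=\sqrt{2K+3}\), the factored building blocks \(z\pm 1\) and \((z+1)K+(z-1)\), and the numerical sanity check at \(K=1\)) that the paper omits, so the method is identical.
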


\begin{proof}
	Direct substitution of \( z_{2,3} \) into the derivative \( G'(z) \) yields the expression in (\ref{eq:modulus-z24}).
	Applying the standard stability criteria for fixed points of rational maps gives the stated conditions.
\end{proof}

The dynamic planes of the family in study will be presented, to find multiple roots of polynomials of the form $f(z) = (z-a)^{nk}(z-b)^{n}$, where $k$ is a proportionality constant for the exponent (multiplicity) of one of the roots of this polynomial.
The dynamic planes were generated using a mesh of $1000 \times 1000$ points in the complex region $[-5, 5] \times [-5, 5]$. For each point in the grid, the iterative method was applied with a maximum of $30$ iterations and a convergence tolerance of $1 \times 10^{-5}$. As before, red regions represents the convergence to the root $z_0 = 1$, green regions to the root $z_1=-1$, and black regions for divergence.

\begin{figure}[H]
	\centering
	\begin{minipage}{\textwidth}
		\centering
		\includegraphics[scale=0.5]{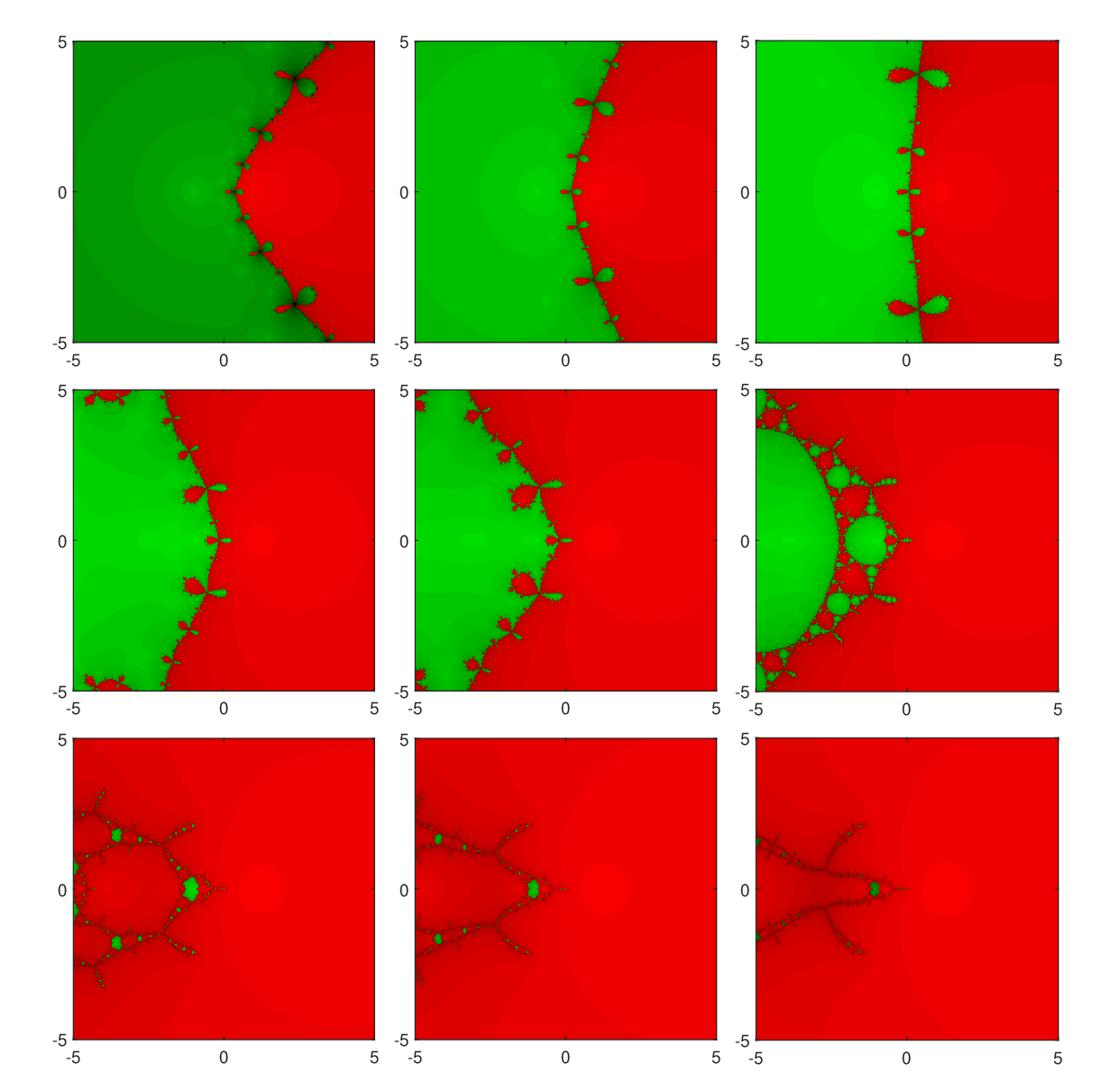}
		\caption{Dynamical planes. $ g(x)=(x-1)^m (x+1)^n; m=Kn; K = \left\{ \frac{1}{2}, \frac{7}{10}, \frac{9}{10}, \frac{7}{5}, \frac{8}{5}, \frac{9}{5}, \frac{23}{10}, \frac{5}{2}, \frac{27}{10} \right\}$}
		\label{fig:Aplication1}
	\end{minipage}
\end{figure}

Following the previous analysis, the figure displays only red regions (convergence to \( z_0 = 1 \)) and green regions (convergence to \( z_1 = -1 \)), with an almost complete absence of black regions (divergence). This indicates that the iterative method used is remarkably stable and efficient for this specific family of polynomials within the studied domain.

The clear separation between the red and green regions suggests that the basins of attraction for each root are well-defined and that the method consistently converges to one root or the other depending on the initial condition. The structure of the regions also reveals certain symmetric or fractal patterns, which is typical in dynamical planes associated with iterative methods in the complex plane.

\section{Result and discussion}

This study examines the complex dynamics of the modified Chebyshev method when applied to a specific class of polynomials characterized by two roots with multiplicities defined as $m = Kn$. By establishing the scaling theorem and conjugation mapping, the research effectively reduces the family of polynomials to a simplified rational operator. 

The analytical determination of fixed and critical points then provides a foundation for understanding the stability and convergence properties of the method under these specific conditions.The investigation further explores the parameter space, utilizing numerical visualizations to map out the corresponding dynamical planes. 

By analyzing the basins of attraction for selected examples, the study highlights the stability of the fixed points and identifies potential regions of divergence or chaotic behavior. These results demonstrate the robustness of the modified Chebyshev method in handling multiple roots.

\bibliographystyle{unsrt}

\begin{thebibliography}{00}

	\bibitem{SAkram2019} S. Akram, F. Zafar and N. Yasmin, \emph{An Optimal Eighth-Order Family of Iterative Methods for Multiple Roots}, Mathematics, \textbf{7}, 672 (2019), 1-14. doi:10.3390/math7080672

	\bibitem{SAkram2021} S. Akram, F. Akram, M. Junjua, M. Arshad and T. Afzal, \emph{A Family of Optimal Eighth Order Iteration Functions for Multiple 	Roots and Its Dynamics}, Hindawi Journal of Mathematics, Volume 2021, Article ID 5597186, 18 pages. https://doi.org/10.1155/2021/5597186

	\bibitem{HArora2022} H. Arora, A. Cordero, J. R. Torregrosa, R. Behl and S. Alharbi, \emph{Derivative-Free Iterative Schemes for Multiple Roots of Nonlinear Functions}, Mathematics, \textbf{10}, 1530 (2022), 1-13. doi.org/10.3390/math10091530


	\bibitem{AFBeardon1991} A. F. Beardon, \emph{Iteration of Rational Functions}, Vol. 132 of Graduate Texts in Mathematics, Springer, New York, NY. USA, 1991.

	\bibitem{CECadenas2017d}{C. E. Cadenas}, \emph{Cuencas de atracci\'on usando MatLab}, Revista MATUA, \textbf{04}, 2 (2017), 1-15.

	\bibitem{CECadenas2018a} { C. E. Cadenas}, \emph{Some geometric constructions of two variants of Newton's method to solving nonlinear equations with multiple roots}, Punjab University Journal of Mathematics, \textbf{50}, 1 (2018), 15-21.

	\bibitem{CECadenas2018b} { C. E. Cadenas}, \emph{On Geometric Constructions of third order methods for multiple roots of nonlinear equations}, Communications in Numerical Analysis 2018, \textbf{1} (2018), 42-55.

	\bibitem{CECadenas2019} { C. E. Cadenas}, \emph{A new approach to study the dynamics of the modified Newton’s method to multiple roots}, Bull. Math. Soc. Sci. Math. Roumanie. Tome 62, \textbf{110}, No. 1 (2019), 67–75.

	\bibitem{CECadenas2023a} { C. E. Cadenas}, \emph{Dynamics of the modified Halley’s method}, Int. J. Nonlinear Anal. Appl., \textbf{14} 5 (2023), 17–26. http://dx.doi.org/10.22075/ijnaa.2023.29200.4086


\bibitem{Cadenas2024} C. E. Cadenas  and D. Linares. (2024). Dinámica de una nueva familia de métodos de un punto, con tercer orden de convergencia, para resolver ecuaciones no lineales. \textit{Revista INGENIERÍA UC.} 31(1). In press.

	\bibitem{FChicharro2020} F. I. Chicharro, R. A. Contreras and N. Garrido, \emph{A Family of Multiple-Root Finding Iterative Methods Based on Weight Functions}, Mathematics,  \textbf{8}, 2194 (2020), 1-17. doi:10.3390/math8122194

	\bibitem{FChicharro2022} F. I. Chicharro, N. Garrido, J. H. Jerezano and D. Pérez-Palau, \emph{Family of fourth-order optimal classes for solving multiple-root nonlinear equations}, Journal of Mathematical Chemistry, (2022), 1-25. doi.org/10.1007/s10910-022-01429-5


	\bibitem{MKansal2020} M. Kansal, A. S. Alshomrani, S. Bhalla, R. Behl and M. Salimi, \emph{One Parameter Optimal Derivative-Free Family to Find the Multiple Roots of Algebraic Nonlinear Equations}, Mathematics \textbf{8}, 2223 (2020), 1-15. doi:10.3390/math8122223



	\bibitem{KKneisl2001} K. Kneisl, \emph{Julia sets for the super-Newton method, Cauchy’s method and Halley’s method}, Chaos, \textbf{11}, 2 (2001), 359-370.

	\bibitem{DKumar2018} D. Kumar, E. Mart\'inez, A. Kumar and J. L. Hueso, \emph{Local Convergence and Dynamics of a Family of Iterative 	Methods for Multiple Roots of Nonlinear Equations}, Vietnam Journal of Mathematics, (2018), 1-20. doi.org/10.1007/s10013-018-0322-y

	\bibitem{DKumar2020} D. Kumar, S. Kumar, J. R. Sharma and M. d’Amore, \emph{Generating Optimal Eighth Order Methods for Computing Multiple Roots}, Symmetry, \textbf{12}, 1947 (2020), 1-12. doi:10.3390/sym12121947

	\bibitem{MAHernandez1992} M. A. Hern\'andez, \emph{Newton–Raphson’s method and convexity}, Univ. u Novom Sadu Zb. Rad. Prirod. Mat. Fak. Ser. Mat., \textbf{22}, 1 (1992), 159–166.

	\bibitem{BNeta2022} B. Neta, \emph{Basins of attraction for family of Popovski’s methods and their extension to multiple roots}, J. Numer. Anal. Approx. Theory, \textbf{51}, 1 (2022), 88–102. doi.org/10.33993/jnaat511-1248

	\bibitem{ESchroder1870} {E. Schr\"{o}der}, \emph{\"{U}ber unendlichviele Algoritheorem zur Auffosung der Gleichungen}, Math. Ann., \textbf{2} (1870), 317-365.

	\bibitem{TSingh2023} T. Singh, H. Arora and L. Jäntschi, \emph{A Family of Higher Order Scheme for Multiple Roots}, Symmetry, \textbf{15}, 228 (2023), 1-14. doi.org/10.3390/sym15010228

	\bibitem{Torregrosa2022} J. R. Torregrosa, A. Cordero, N. Garrido and P. Triguero-Navarro, \emph{Modifying Kurchatov's method to find multiple roots of nonlinear equations}, (2022), 1-16.


	\bibitem{Vazquez2018} J. E. Vázquez-Lozano, A. Cordero and J. R. Torregrosa, \emph{Dynamical analysis on cubic polynomials of Damped Traub’s method for approximating multiple roots}. Applied Mathematics and Computation, \textbf{328}, (2018), 82–99. doi.org/10.1016/j.amc.2018.01.043

	\bibitem{FZafar2018} F. Zafar, A. Cordero, S. Sultana and J. R. Torregrosa, \emph{Optimal iterative methods for finding multiple roots of nonlinear equations using weight functions and dynamics}, Journal of Computational and Applied Mathematics. \textbf{342} (2018), 352–374

	\bibitem{FZafar2018a} F. Zafar, A. Cordero and J. R. Torregrosa,  \emph{A family of optimal fourth-order methods for multiple roots of nonlinear equations}, Math Meth Appl Sci. (2018), 1–16. 10.1002/mma.5384

	\bibitem{FZafar2018b} F. Zafar, A. Cordero and J. R. Torregrosa, \emph{Stability analysis of a family of optimal fourth-order methods for multiple roots}, Numerical Algoritheorems, (2018), 1-35. doi.org/10.1007/s11075-018-0577-0

	\bibitem{FZafar2022} F. Zafar, A. Cordero, I. Ashraf and J. R. Torregrosa, \emph{An optimal eighth order derivative free multiple root finding numerical method and applications to chemistry}, Journal of Mathematical Chemistry, (2022), 1-27. doi.org/10.1007/s10910-022-01411-1

	\bibitem{FZafar2023} F. Zafar, A. Cordero, D. Rizvi and J. R. Torregrosa, \emph{An optimal eighth order derivative free multiple root finding scheme and its dynamics}, AIMS Mathematics, \textbf{8}, 4 (2023), 8478–8503. 10.3934/math.2023427


	\bibitem{Urbanski2023} M. R. Urbański, M. Roy and S. Munday. (2023). Non-invertible dynamical systems. Volume 3: Analytic endomorphisms of the Riemann sphere. Walter de Gruyter. Berlin.

\end{thebibliography}

\newpage

\end{document}